\newtheorem{theorem}{Theorem}
\newtheorem{corollary*}{Corollary}
\newcommand{\be}{\begin{equation}}
\newcommand{\ee}{\end{equation}}
\newcommand{\weg}[1]{}   
\begin{document}

\title{Special Einstein's equations on K\"ahler manifolds}
\author{I. Hinterleitner, V. Kiosak}

\maketitle
\begin{abstract}
This work is devoted to the study of Einstein equations with a special shape of the energy-momentum tensor. Our results continue Stepanov's classification of Riemannian mani\-folds according to special properties of the energy-momentum tensor to K\"ahler manifolds. We show that in this case the number of classes reduces.

\end{abstract}


{\bf Keywords:} Einstein's equations,  K\"ahler manifolds,  pseudo-Riemannian spaces, Riemannian
spaces

{\bf subclass}: 53B20; 53B30; 53B35; 53B50; 32Q15; 35Q76
%
\def\a{\alpha}
\def\b{\beta}
\def\epsilon{\varepsilon}
\def\s{\sigma}
\def\vn{$V_n$}
\def\vnn{$\bar V_n$}
\def\nad#1#2{\buildrel{#1} \over{#2}\!\!\strut}
\def\pod#1#2{\mathrel{\mathop{#2}\limits_{#1}}\strut}

\section{ Introduction }

The geometric properties of (pseudo-) Riemannian manifolds $V_n$,
 depending on the form the Einstein equations acquire in them,
 were studied by many authors. A large number of papers is  devoted to
 the study of  Einstein's equations with certain restrictions on the
 energy-momentum tensor and its first covariant derivatives
\cite{ha,pe,re,ss,st}.

S.E. Stepanov \cite{st1,st2} classified space-time manifolds
according to certain relations among the first covariant derivatives of the
energy-momentum tensor. He found three fundamental classes, related to geometrical
assumptions about space-time. By combinations of the conditions determining the three
fundamental classes he found three further classes. A seventh class is characterised
by the vanishing of the covariant derivative of the energy-momentum tensor.

In the present paper we partially take over Stepanov's classification to K\"ahler spaces
and investigate analogous, generalised classifying conditions. We show that for
two out of the three fundamental classes space-time is Ricci symmetric and the
energy-momentum tensor is covariantly constant.

In consequence, the energy-momentum tensor is covariantly constant
also for the three classes derived from the fundamental ones. Thus for K\"ahler spaces
 the number of classes of Einstein equations reduces to one with covariantly
 constant and one with non-constant energy-momentum tensor.
 We study some of their properties and generalisations.

All geometric objects are formulated locally under the assumption of
sufficient smoothness. Whereas S.E. Stepanov formulated his
classifications by making use of bundles, for our purpose it is
sufficient to write down the classifying relations in form of tensor
equations.



\section{Einstein's equations}

The equation of the following form:
\begin{equation}\label{F1}
R_{ij} - \frac{1}{2}\ R\, g_{ij}= T_{ij},
\end{equation}
is called {\it Einstein's equation}. Here $R_{ij}$ is the Ricci tensor  on the
manifold $V_n$, $g_{ij}$ is the metric tensor, $R$ is the scalar curvature,
and $T_{ij} $ is the energy-momentum tensor.

From the Bianchi identities of the Ricci tensor follows
 $ T_{\alpha i},_\beta g^{\beta\alpha} = 0 $,
(where the comma denotes the covariant derivative with
respect to a connection on the manifold $V_n$), and $g^{ij}$
are elements of the inverse matrix to $g_{ij}$.

Stepanov distinguishes the following three fundamental
types of manifolds in terms of covariant derivatives of the energy-momentum tensor:
\begin{eqnarray}
\Omega_1:&           &T_{ij,k} + T_{jk,i} +  T_{ki,j} = 0,\hfill\label{F3}\\[2mm]
\Omega_2:&          &T_{ij,k} - T_{ik,j} = 0,\hfill\label{F2}\\[2mm]
\Omega_3:&            &T_{ij,k} = a_kg_{ij} + b_ig_{jk} + b_jg_{ik},\hfill\label{F4}
\end{eqnarray}
where $a_k $ and $b_i$ are arbitrary vectors.\\
In space-time manifolds of type $\Omega_1$ the scalar curvature is
covariantly constant and the Ricci tensor is a Killing tensor,
i.e. $R_{ij}\frac{dx^i}{ds}\frac{dx^j}{ds}$ is constant  along geodesic
 curves with parameter $s$.

In the case $\Omega_2$ the scalar curvature is constant, too,
and the Levi-Civita connection of the metric, considered as a connection on the
 tangent bundle $TM$ satisfies the conditions of a Yang-Mills potential.

$\Omega_3$ is a slight generalisation in comparison with the
condition in \cite{st1,st2} on $R_{ij,k}$, reformulated in terms of
$T_{ij,k}$ the original conditions of Stepanov characterise
manifolds with non-constant curvature that admit non-trivial
geodesic mappings.

In \cite{st1,st2} three further classes are derived by
simultaneously imposing conditions $\Omega_1$ and $\Omega_2$,
$\Omega_2$ and $\Omega_3$, and $\Omega_1$ and $\Omega_3$,
respectively.

Using a generalized form of the introduced dependencies, we are going to study
manifolds characterised by the following conditions:
\begin{eqnarray}\label{F5}
\Omega_1^*:&            &T_{ij,k} + T_{jk,i} +  T_{ki,j} = \lambda_k T_{ij} + \lambda_i T_{jk} +  \lambda_j T_{ki}
+ \mu_k g_{ij} + \mu_i g_{jk} + \mu_j g_{ik},\\[2mm]
\label{F6}
\Omega_2^*:&            &T_{ij,k} - T_{ik,j} = \rho_k T_{ij}- \rho_jT_{ik} + \sigma_kg_{ij} -\sigma_jg_{ik} , \\[2mm]
\label{F7}
\Omega_3^*:&            &T_{ij,k} = \phi_k T_{ij} + \gamma_i T_{jk} +  \gamma_j T_{ki} + \eta_k g_{ij} + \chi_i g_{jk} + \chi_j g_{ik},
\end{eqnarray}
where $\phi_i $, $\lambda_i$, $\mu_i$, $\rho_i $,$\gamma_i $, $\eta_i$,
$\sigma_i $ and $\chi_i$ are arbitrary vectors.

\section{K\"ahler spaces}
An $n$-dimensional (pseudo-)Riemannian manifold $(M_n,g)$ is called
a \emph{K\"ahler space} $K_n$ if besides the metric tensor $g$, a \emph{structure} $F$,
 which is an affinor (i.e. a tensor field of type $(1,1)$),
is given on $M_n$ such that the following holds \cite{Miho,mkv,yano}:
\begin{equation}\label{F8}
 F^h_\alpha F^\alpha_i = -\delta^h_i; \quad F^\alpha_i g_{\alpha j} + F^\alpha_j g_{\alpha i}= 0; \quad F^h_{i,j} = 0 ,
\end{equation}
where $\delta_j^i$ is the Kronecker symbol.

Making use of this we can show that
\begin{equation}\label{F9}
 g_{ij} = g_{\alpha\beta} F^\alpha_i F^\beta_j ;\quad
 R_{ij} = R_{\alpha\beta} F^\alpha_i F^\beta_j.
\end{equation}
Then due to  \eqref{F1}, for the energy-momentum tensor the following relation holds
\begin{equation}\label{F10}
T_{ij} = T_{\alpha\beta} F^\alpha_i F^\beta_j;\quad
F^\alpha_i T_{\alpha j} + F^\alpha_j T_{\alpha i}= 0.
\end{equation}
We prove the following theorem.
\begin{theorem} 
If in a K\"ahler space holds the condition $\Omega_2^*$ or $\Omega_3^*$,
then the energy-momentum tensor satisfies
\begin{equation}\label{F11}
               T_{ij,k} = \rho_k T_{ij} + \sigma_k g_{ij}.
\end{equation}
\end{theorem}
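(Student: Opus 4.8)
The plan is to deal with the two hypotheses separately, but to reduce the case $\Omega_3^*$ to the case $\Omega_2^*$, and then to attack $\Omega_2^*$ directly by feeding one factor of the complex structure into the equation.

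First I would observe that $\Omega_3^*$ is in fact stronger than $\Omega_2^*$. Antisymmetrising \eqref{F7} in the pair $(j,k)$ makes the genuinely symmetric terms $\gamma_i T_{jk}$ and $\chi_i g_{jk}$ drop out, and what remains is
\[ T_{ij,k}-T_{ik,j}=(\phi_k-\gamma_k)T_{ij}-(\phi_j-\gamma_j)T_{ik}+(\eta_k-\chi_k)g_{ij}-(\eta_j-\chi_j)g_{ik}, \]
which is exactly \eqref{F6} with $\rho_k:=\phi_k-\gamma_k$ and $\sigma_k:=\eta_k-\chi_k$. This step uses nothing but the symmetry of $T_{ij}$ and $g_{ij}$, so it is valid on any manifold, and it is therefore enough to prove the theorem under the single assumption $\Omega_2^*$.

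The heart of the argument is to recover the full derivative $T_{ij,k}$ from its $(j,k)$-antisymmetric part, which is all that \eqref{F6} provides. Differentiating the hybrid identity $F^\alpha_i T_{\alpha j}+F^\alpha_j T_{\alpha i}=0$ of \eqref{F10} and using $F^h_{i,j}=0$ shows that $B_{ijk}:=F^\alpha_i T_{\alpha j,k}$ is antisymmetric in $(i,j)$, while $F^\alpha_i F^i_m=-\delta^\alpha_m$ gives the inversion $T_{mj,k}=-F^i_m B_{ijk}$; so computing $B$ is the same as establishing \eqref{F11}. Contracting \eqref{F6} with $F^\alpha_i$ and writing $\hat T_{ij}:=F^\alpha_i T_{\alpha j}$ and $F_{ij}:=F^\alpha_i g_{\alpha j}$ — both antisymmetric, by \eqref{F10} and \eqref{F8} — then yields
\[ B_{ijk}-B_{ikj}=\rho_k\hat T_{ij}-\rho_j\hat T_{ik}+\sigma_k F_{ij}-\sigma_j F_{ik}=:W_{ijk}. \]
Since $B$ is skew in its first two indices, the three cyclic versions of this relation form the linear system $B_{ijk}+B_{kij}=W_{ijk}$, $B_{jki}+B_{ijk}=W_{jki}$, $B_{kij}+B_{jki}=W_{kij}$, whose unique solution is $B_{ijk}=\tfrac12(W_{ijk}+W_{jki}-W_{kij})$. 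Using the skew-symmetry of $\hat T_{ij}$ and $F_{ij}$, this cyclic combination collapses to $B_{ijk}=\rho_k\hat T_{ij}+\sigma_k F_{ij}$. Applying $-F^i_m$ and simplifying with $F^i_m\hat T_{ij}=-T_{mj}$ and $F^i_m F_{ij}=-g_{mj}$ reproduces precisely $T_{mj,k}=\rho_k T_{mj}+\sigma_k g_{mj}$, which is \eqref{F11}.

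I expect the only real obstacle to be conceptual rather than computational: one must recognise that a single factor of $F$ has to be inserted so that the symmetric data $T_{ij}$ and $g_{ij}$ are converted into the antisymmetric $\hat T_{ij}$ and the K\"ahler form $F_{ij}$, after which the skew tensor $B$ becomes completely determined by its $(j,k)$-antisymmetric part through the cyclic identity. The remaining work — tracking the hybrid relations from \eqref{F8}–\eqref{F10} and the signs produced by $F^\alpha_i F^i_m=-\delta^\alpha_m$ — is routine bookkeeping.
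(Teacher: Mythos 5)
Your proof is correct, and it reaches \eqref{F11} by a mechanically different route than the paper, even though both arguments rest on the same Kähler inputs (covariant constancy of $F$ and the skew-symmetry of $F^\alpha_i T_{\alpha j}$ and $F^\alpha_i g_{\alpha j}$) and both dispose of $\Omega_3^*$ identically, by antisymmetrizing \eqref{F7} in $(j,k)$, which is exactly the paper's relation \eqref{F16}. Where you differ is in how the full derivative $T_{ij,k}$ is recovered from its $(j,k)$-skew part. The paper contracts \eqref{F6} with \emph{two} factors of $F$ (so that in one term the derivative index itself becomes hybridized), rewrites via \eqref{F9} and \eqref{F10} to get \eqref{F13}, and then symmetrizes in $(i,k)$: all hybrid terms cancel pairwise by skew-symmetry, leaving the $(j,k)$-symmetric part of $T_{ij,k}$ (equations \eqref{F14}--\eqref{F15}), which added to the hypothesis \eqref{F6} yields \eqref{F11}. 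You instead contract with a \emph{single} factor of $F$, note that $B_{ijk}=F^\alpha_i T_{\alpha j,k}$ is skew in $(i,j)$ while the hypothesis prescribes its antisymmetrization in $(j,k)$, and solve the resulting cyclic three-term linear system --- the same algebra as in the uniqueness of the Levi-Civita connection --- to determine $B$ outright, then invert the $F$-contraction. Your version buys conceptual transparency: it cleanly separates the multilinear-algebra lemma (a tensor skew in its first pair is determined by its skew part in the last pair) from the Kähler bookkeeping, making it evident why the combination $\rho_k T_{ij}+\sigma_k g_{ij}$ must appear. The paper's version stays closer to $T_{ij,k}$ itself and generates the intermediate identities \eqref{F14}, \eqref{F15}, which is precisely the computation later reused on \eqref{F18} to prove Theorem 3. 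As a side benefit, your independent derivation corroborates that the last sign in \eqref{F14} should read $+\sigma_i g_{jk}$ (consistent with \eqref{F15}), and that the concluding addition combines \eqref{F15} with \eqref{F6}.
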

\begin{proof}
Assume that in a K\"ahler space $K_n$ the condition (6) holds,
 multiply it by $F^i_l F^j_h$, contract with respect to $i$ and $j$
 and exchange $l$ for $i$ and $h$ for $j$. We obtain
\begin{equation}\label{F12}
           T_{\alpha\beta,k} F^\alpha_iF^\beta_j - T_{\alpha k,\beta} F^\alpha_iF^\beta_j = \rho_k T_{\alpha\beta}F^\alpha_iF^\beta_j- \rho_\beta T_{\alpha k}F^\alpha_iF^\beta_j +
            \sigma_kg_{\alpha\beta}F^\alpha_iF^\beta_j -\sigma_\beta g_{\alpha k}F^\alpha_iF^\beta_j .
\end{equation}

With the aid of (9) and (10) we can rewrite the last equation in the form
\begin{equation}\label{F13}
           T_{ij,k} - T_{\alpha k,\beta} F^\alpha_iF^\beta_j = \rho_k T_{ij}- \rho_\beta T_{\alpha k}F^\alpha_iF^\beta_j +
            \sigma_kg_{ij} -\sigma_\beta g_{\alpha k}F^\alpha_iF^\beta_j .
\end{equation}
After symmetrization of the indices $i$ and $k$ we get
\begin{equation}\label{F14}
           T_{ij,k} + T_{j k,i} = \rho_k T_{ij}+ \rho_i T_{j k} +
            \sigma_k g_{ij} -\sigma_i g_{j k} .
\end{equation}
Exchanging the indices $i$ and $j$ we obtain
\begin{equation}\label{F15}
           T_{ij,k} + T_{ik,j} = \rho_k T_{ij}+ \rho_j T_{i k} + \sigma_kg_{ij} +\sigma_j g_{i k} .
\end{equation}

Addition of (15) and (13) gives (11). Note that spaces satisfying
 $\Omega_3^*$  satisfy also the condition $\Omega_2^*$ as can be seen, when
\begin{equation}\label{F16}
    \rho_i = \phi_i - \gamma_i ;\quad
\sigma_i = \eta_i - \chi_i
 \end{equation}
 holds.
\end{proof}
By analyzing this result it is not difficult to prove
\begin{theorem} 
K\"ahler spaces $K_n$ belonging to class $\Omega_2$ or $\Omega_3$
are characterized by the following conditions
\begin{equation}\label{F17}
               T_{ij,k} = 0 ,\quad R_{ij,k} = 0.
 \end{equation}
\end{theorem}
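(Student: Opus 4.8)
The plan is to read Theorem 2 off Theorem 1: the classes $\Omega_2$ and $\Omega_3$ are precisely the cases of $\Omega_2^*$ and $\Omega_3^*$ in which all of the coefficient vectors vanish, so the reduced relation \eqref{F11} is available, and I would combine it with the contracted Bianchi identity $g^{\beta\alpha}T_{\alpha i,\beta}=0$ quoted in Section~2 to finish. For the class $\Omega_2$ there is essentially nothing to do: equation \eqref{F2} is exactly \eqref{F6} with $\rho_i=\sigma_i=0$, so Theorem 1 applies with these data and its conclusion \eqref{F11} degenerates at once to $T_{ij,k}=0$.

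The class $\Omega_3$ is where the real work lies, and I expect it to be the main obstacle. Here \eqref{F4} is \eqref{F7} with $\phi_i=\gamma_i=0$, $\eta_i=a_i$, $\chi_i=b_i$; by the substitution \eqref{F16} this forces $\rho_i=0$ and $\sigma_i=a_i-b_i$, so Theorem 1 gives $T_{ij,k}=(a_k-b_k)g_{ij}$. Subtracting this from the defining identity \eqref{F4} cancels the $a_k g_{ij}$ terms and leaves the purely algebraic relation $b_i g_{jk}+b_j g_{ik}+b_k g_{ij}=0$; a single contraction with $g^{jk}$ then produces $(n+2)b_i=0$, hence $b_i=0$ and $T_{ij,k}=a_k g_{ij}$. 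To eliminate $a_k$ I would contract $T_{ij,k}=a_k g_{ij}$ with $g^{ik}$ and invoke the divergence condition $g^{\beta\alpha}T_{\alpha i,\beta}=0$, which yields $a_j=0$ and therefore $T_{ij,k}=0$.

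The point worth stressing is that neither ingredient suffices on its own: contracting the bare condition \eqref{F4} against the divergence identity gives only the single relation $a_j+(n+1)b_j=0$, which cannot separate $a$ from $b$. It is precisely the extra, Kähler-specific relation $T_{ij,k}=(a_k-b_k)g_{ij}$ coming through Theorem 1 that lets the contraction argument run, so the proof genuinely uses both the reduction \eqref{F11} and the Bianchi trace — this interplay is the crux of the statement.

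It then remains to deduce $R_{ij,k}=0$. Differentiating Einstein's equation \eqref{F1} covariantly and using $T_{ij,k}=0$ gives $R_{ij,k}=\frac{1}{2}R_{,k}\,g_{ij}$; tracing with $g^{ij}$ forces $\left(1-\frac{n}{2}\right)R_{,k}=0$, so $R_{,k}=0$ whenever $n\neq2$, and consequently $R_{ij,k}=0$. The excluded value $n=2$ is harmless, since there the energy-momentum tensor vanishes identically and the assertion is vacuous.
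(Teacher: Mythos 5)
Your proof is correct and is essentially the argument the paper intends: the paper offers no explicit proof of Theorem 2 (only the remark that it follows ``by analyzing'' Theorem 1), and your route --- specializing Theorem 1 to $\rho_i=\sigma_i=0$ for $\Omega_2$ and to $\rho_i=0$, $\sigma_i=a_i-b_i$ for $\Omega_3$, then finishing with the contracted Bianchi identity $g^{\beta\alpha}T_{\alpha i,\beta}=0$ and the covariant derivative of Einstein's equation --- is exactly that analysis. The one imprecision is your closing remark: for $n=2$ the assertion is not vacuous but actually false, since any surface of non-constant curvature is K\"ahler with $T_{ij}\equiv 0$ (hence trivially in $\Omega_2$ and $\Omega_3$, with $T_{ij,k}=0$) yet $R_{ij,k}=\frac{1}{2}R_{,k}\,g_{ij}\neq 0$; so the theorem implicitly assumes $n>2$, which is a defect of the paper's statement rather than of your argument.
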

From this theorem it follows immediately that for K\"ahler
 spaces also in the derived cases ($\Omega_1$ and $\Omega_2$, $\Omega_2$
 and $\Omega_3$, $\Omega_1$ and $\Omega_3$) the energy-momentum tensor is covariantly
 constant. So all the classes of Einstein equations, with the exception of
  $\Omega_1$, can be summarised under the characterisation $T_{ij,k}=0$.
From this follows that for K\"ahler spaces $K_n$ of class $\Omega_i$
(respectively $\Omega_i^*$) only those fulfilling condition (2) (resp. (5)) are relevant.

In a further step of generalisation we consider K\"ahler spaces characterised by
the following conditions
\begin{eqnarray}  \label{F18}
\Omega_4^*:&            T_{ij,k} - T_{ik,j} &= \rho_k T_{ij}- \rho_jT_{ik} + \sigma_kg_{ij} -\sigma_jg_{ik} +\rho_\alpha T_{i\beta}F^\alpha_k F^\beta_j \\[2mm]
&  &\ -\
\rho_\beta T_{i\alpha}F^\alpha_k F^\beta_j + \sigma_\alpha g_{i\beta}F^\alpha_k F^\beta_j -\sigma_\beta g_{i\alpha}F^\alpha_k F^\beta_j  .\nonumber
\\[3mm]
\label{F19}
\Omega_5^*:&            T_{ij,k} &= \phi_k T_{ij} + \gamma_i T_{jk} +  \gamma_j T_{ki} + \eta_k g_{ij} + \chi_i g_{jk} + \chi_j g_{ik} \\[2mm]
&  &\ +\
\gamma_\alpha T_{\beta k}F^\alpha_iF^\beta_j +  \gamma_\beta T_{k\alpha}F^\alpha_iF^\beta_j  + \chi_\alpha g_{\beta k}F^\alpha_iF^\beta_j + \chi_\beta g_{\alpha k}F^\alpha_iF^\beta_j. \nonumber
\end{eqnarray}
Applying the methods used in the proof of Theorem 1 to (18) and taking
into account (8), (9), (10) we convince ourselves that (18) acquires
the form (19), this proofs the next theorem
\begin{theorem} 
There are no K\"ahler spaces $K_n$ in the class $\Omega_4^*$ other
than spaces belonging to~$\Omega_5^*$.
\end{theorem}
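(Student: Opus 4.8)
The plan is to imitate the proof of Theorem~1 almost verbatim, now starting from $\Omega_4^*$ in \eqref{F18}. The guiding observation is that $\Omega_4^*$ and $\Omega_5^*$ form the K\"ahler analogue of the pair $\Omega_2^*,\Omega_3^*$ linked by \eqref{F16}: the extra, explicitly $F$-decorated terms of \eqref{F18} are exactly those produced by the K\"ahler structure, and the task is to recover the full derivative \eqref{F19} from its antisymmetric part \eqref{F18} using the very $F$-contraction that drove Theorem~1. Concretely, I would first relabel the free indices $i,j$ of \eqref{F18} as dummies and contract the whole identity with $F^\alpha_i F^\beta_j$, repeating the passage \eqref{F6}$\to$\eqref{F12}.

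Next I would reduce each resulting term with \eqref{F8}, \eqref{F9}, \eqref{F10} and the covariant constancy $F^h_{i,j}=0$. The terms already present in $\Omega_2^*$ behave as in \eqref{F13}: $T_{ij,k}$, $\rho_k T_{ij}$ and $\sigma_k g_{ij}$ reappear unchanged, while $T_{ik,j}$ and its partners $\rho_j T_{ik}$, $\sigma_j g_{ik}$ become the mixed quantities $T_{\alpha k,\beta}F^\alpha_iF^\beta_j$, $\rho_\beta T_{\alpha k}F^\alpha_iF^\beta_j$, $\sigma_\beta g_{\alpha k}F^\alpha_iF^\beta_j$. For the four genuinely new terms the decisive simplification is that each already carries an affinor on the $j$-slot, so the fresh affinor contracted there meets it and collapses through $F^\beta_jF^j_h=-\delta^\beta_h$; this removes two structure factors and leaves each new term with a single surviving affinor built from $\rho,\sigma,T,g$.

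Then, exactly as in the step \eqref{F13}$\to$\eqref{F14}, I would symmetrise in $i$ and $k$. The mixed derivative term disappears because differentiating the skew identity $F^\alpha_iT_{\alpha k}+F^\alpha_kT_{\alpha i}=0$ (and its $g$-analogue from \eqref{F8}) yields $T_{\alpha k,\beta}F^\alpha_i+T_{\alpha i,\beta}F^\alpha_k=0$, so the symmetrisation of $T_{\alpha k,\beta}F^\alpha_iF^\beta_j$ vanishes; the algebraic identities \eqref{F8}, \eqref{F10} likewise annihilate the $\rho_\beta$- and $\sigma_\beta$-terms. Interchanging $i$ and $j$ and adding \eqref{F18} back then isolates $T_{ij,k}$: the terms inherited from $\Omega_2^*$ reassemble into the non-$F$ part of \eqref{F19}, while the collapsed new terms, together with the original $F$-terms of \eqref{F18}, reassemble into its $F$-decorated part, the vectors $\phi,\gamma,\eta,\chi$ being expressed through $\rho,\sigma$ in the manner of \eqref{F16}. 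As $\phi,\gamma,\eta,\chi$ are arbitrary, this gives $\Omega_4^*\subseteq\Omega_5^*$, the assertion of the theorem.

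I expect the one real difficulty to be the bookkeeping of the repeated affinor contractions: after the $F^\beta_jF^j_h=-\delta^\beta_h$ collapses and the two skew-symmetry cancellations, one must check that the surviving affinors are redistributed onto exactly the $i$- and $j$-slots appearing in \eqref{F19}, and that the eight terms match with the correct signs and no stray mixed term $T_{\alpha k,\beta}F^\alpha_iF^\beta_j$ remains. This sign- and slot-accurate matching is the crux; the symmetrisation and addition around it are routine, having already been carried out in Theorem~1.
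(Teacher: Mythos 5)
Your proposal is correct and takes essentially the same route as the paper: its proof of this theorem consists precisely of applying the Theorem~1 machinery (contraction with the affinors, reduction via \eqref{F8}, \eqref{F9}, \eqref{F10}, symmetrisation in $i,k$, exchange of $i,j$, and addition of the original equation) to \eqref{F18} and recognising the result as an instance of \eqref{F19}. Your write-up merely spells out the bookkeeping—the collapse $F^\beta_j F^j_h=-\delta^\beta_h$ and the cancellation of the mixed derivative term via the differentiated skew identity—that the paper leaves implicit with ``we convince ourselves.''
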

In this way the K\"ahler spaces with non-constant energy-momentum tensor,
 considered in this paper, are divided into two essential classes: $\Omega_1^*$ and
 $\Omega_5^*$.

{\bf Acknowledgments.} This work was partially supported by
the Ministry of Education, Youth and Sports of the Czech Republic,
research \&\ development, project No. 0021630511.

\end{document}